\documentclass[11pt,twoside]{article}

\usepackage[left=3cm, right=3cm, top=2.5cm]{geometry}
\usepackage{amssymb}
\usepackage{amsmath}
\usepackage{amsthm}
\usepackage{mathtools}
\usepackage{listings}
\usepackage{epsfig}
\usepackage{latexsym}
\usepackage{mathtools}
\usepackage{booktabs}
\usepackage{graphicx}
\usepackage{xcolor}
\usepackage{epsfig}
\usepackage{url}
\usepackage{amsfonts}

\usepackage{hyperref}
\usepackage{tipa}
\usepackage{dsfont}
\usepackage[mathscr]{euscript}
\usepackage{enumitem}
\usepackage{bm}
\usepackage[backend=bibtex,style=numeric,citestyle=numeric,giveninits=true,sortcites=true,maxbibnames=10]{biblatex}
\usepackage{dsfont}
\usepackage{subcaption}
\usepackage{appendix}
\usepackage{float}

\usepackage{filecontents}
\begin{filecontents}{\jobname.bib}

@article{Johansson2019multimesh,
author = {Johansson, August and Kehlet, Benjamin and Larson, Mats G. and Logg, Anders},
file = {:home/croci/Documents/MendeleyDesktop/Johansson et al. - 2019 - Multimesh finite element methods Solving PDEs on multiple intersecting meshes.pdf:pdf},
issn = {00457825},
journal = {Computer Methods in Applied Mechanics and Engineering},
month = {jan},
pages = {672--689},
publisher = {Elsevier B.V.},
title = {{Multimesh finite element methods: Solving PDEs on multiple intersecting meshes}},
volume = {343},
year = {2019}
}

@article{Burman2015CutFEM,
author = {Burman, Erik and Claus, Susanne and Hansbo, Peter and Larson, Mats G. and Massing, Andr{\'{e}}},
issn = {00295981},
journal = {International Journal for Numerical Methods in Engineering},
month = {nov},
number = {7},
pages = {472--501},
title = {{CutFEM: Discretizing geometry and partial differential equations}},
volume = {104},
year = {2015}
}

@article{maddison2012directional,
	author =        {Maddison, J. R. and Farrell, P. E.},
	journal =       {Journal of Computational Physics},
	pages =         {4422--4432},
	publisher =     {Elsevier},
	title =         {Directional integration on unstructured meshes via
		supermesh construction},
	volume =        {231},
	year =          {2012},
}

@article{aguerre2017conservative,
	author =        {Aguerre, H. J. and Dami{\'a}n, S. M. and
		Gimenez, J. M. and Nigro, N. M.},
	journal =       {Journal of Computational Physics},
	pages =         {21--49},
	publisher =     {Elsevier},
	title =         {Conservative handling of arbitrary non-conformal
		interfaces using an efficient supermesh},
	volume =        {335},
	year =          {2017},
}

@inproceedings{PlantaEtAl2018,
	author =        {Planta, Cyrill and Vogler, Daniel and Nestola, Maria and
		Zulian, Patrick and Krause, Rolf},
	booktitle =     {43rd Workshop on Geothermal Reservoir Engineering},
	pages =         {1--13},
	publisher =     {Stanford University},
	title =         {{Variational parallel information transfer between
			unstructured grids in geophysics - applications and
			solutions methods}},
	year =          {2018},
}

@article{SchlottkeEtAl2019,
	author =        {Schlottke-Lakemper, Michael and Niem\"oller, Ansgar and
		Meinke, Matthias and Schr\"oder, Wolfgang},
	journal =       {Computer Methods in Applied Mechanics and
		Engineering},
	month =         {aug},
	pages =         {461--487},
	publisher =     {Elsevier},
	title =         {{Efficient parallelization for volume-coupled
			multiphysics simulations on hierarchical Cartesian
			grids}},
	volume =        {352},
	year =          {2019},
	doi =           {10.1016/j.cma.2019.04.032},
}

@article{croci2018efficient,
	author =        {Croci, M. and Giles, M. B. and Rognes, M. E. and
		Farrell, P. E.},
	journal =       {SIAM/ASA Journal on Uncertainty Quantification},
	pages =         {1630--1655},
	publisher =     {SIAM},
	title =         {Efficient white noise sampling and coupling for
		multilevel {Monte Carlo} with nonnested meshes},
	volume =        {6},
	year =          {2018},
}

@article{Farrell2009Supermesh,
	author =        {Farrell, P. E. and Piggott, M. D. and Pain, C. C. and
		Gorman, G. J. and Wilson, C. R.},
	journal =       {Computer Methods in Applied Mechanics and
		Engineering},
	pages =         {2632--2642},
	title =         {{Conservative interpolation between unstructured
			meshes via supermesh construction}},
	volume =        {198},
	year =          {2009},
}

@article{Farrell2011Supermesh,
	author =        {Farrell, P. E. and Maddison, J. R.},
	journal =       {Computer Methods in Applied Mechanics and
		Engineering},
	pages =         {89--100},
	title =         {{Conservative interpolation between volume meshes by
			local Galerkin projection}},
	volume =        {200},
	year =          {2011},
}

@article{menon2011conservative,
	author =        {Menon, S. and Schmidt, D. P.},
	journal =       {Computer Methods in Applied Mechanics and
		Engineering},
	pages =         {2797--2804},
	publisher =     {Elsevier},
	title =         {Conservative interpolation on unstructured polyhedral
		meshes: an extension of the supermesh approach to
		cell-centered finite-volume variables},
	volume =        {200},
	year =          {2011},
}

@article{krause2016parallel,
	author =        {Krause, R. and Zulian, P.},
	journal =       {SIAM Journal on Scientific Computing},
	pages =         {307--333},
	publisher =     {SIAM},
	title =         {A parallel approach to the variational transfer of
		discrete fields between arbitrarily distributed
		unstructured finite element meshes},
	volume =        {38},
	year =          {2016},
}

@techreport{libsupermesh-tech-report,
	author =        {Maddison, James R. and Farrell, P. E. and
		Panourgias, Iakovos},
	institution =   {Archer, eCSE03-08},
	title =         {{Parallel supermeshing for multimesh modelling}},
	year =          {2013},
}

@book{brenner2007mathematical,
	author =        {Brenner, Susanne and Scott, Ridgway L.},
	publisher =     {Springer Science and Business Media},
	title =         {{The Mathematical Theory of Finite Element Methods}},
	volume =        {15},
	year =          {1994},
	annote =        {NULL},
	isbn =          {978-1-4757-4340-1},
}

@article{Jung1901,
	author =        {Jung, H.},
	journal =       {Journal F{\"u}r die Reine und Angewandte Mathematik},
	pages =         {241--257},
	title =         {{{\"U}ber die kleinste Kugel, die eine r{\"a}umliche
			Figur einschlie{\ss}t}},
	volume =        {123},
	year =          {1901},
}

@article{Jung1910,
	author =        {Jung, H.},
	journal =       {{Journal F\"u}r die Reine und Angewandte Mathematik},
	pages =         {310--313},
	title =         {{{\"U}ber den kleinsten Kreis, der eine ebene Figur
			einschlie{\ss}t.}},
	volume =        {137},
	year =          {1910},
}

@incollection{toth1993,
	address =       {Berlin, Heidelberg},
	author =        {T{\'o}th, G{\'a}bor Fejes and Kuperberg, Wlodzimierz},
	booktitle =     {New Trends in Discrete and Computational Geometry},
	pages =         {251--279},
	publisher =     {Springer},
	title =         {{A survey of recent results in the theory of packing
			and covering}},
	year =          {1993},
}
\end{filecontents}

\immediate\write18{bibtex \jobname}
\addbibresource{\jobname.bib}
	
% Custom colors
\usepackage{color}
\definecolor{deepblue}{rgb}{0,0,0.5}
\definecolor{deepred}{rgb}{0.6,0,0}
\definecolor{deepgreen}{rgb}{0,0.5,0}

% hyperref setup
	\definecolor{DarkBlue}{rgb}{0.00,0.00,0.55}
	\definecolor{Black}{rgb}{0.00,0.00,0.00}
	\hypersetup{
		linkcolor = DarkBlue,
		anchorcolor = DarkBlue,
		citecolor = DarkBlue,
		filecolor = DarkBlue,
		colorlinks  = true,
		urlcolor = Black
	}
	
\graphicspath{ {./} }
\DeclareGraphicsExtensions{.eps,.pdf,.png,.jpg}  
	
% Theorem Styles
\newtheorem{theorem}{Theorem}[section]
\newtheorem{lemma}[theorem]{Lemma}

% Definition Styles
\theoremstyle{definition}
\newtheorem{definition}{Definition}[section]

\theoremstyle{remark}

%\theoremstyle{notation}

%\theoremstyle{observation}

% Declare title and authors, without \thanks
\newcommand{\TheTitle}{Complexity bounds on supermesh construction for quasi-uniform meshes}
\newcommand{\TheAuthors}{M.~Croci and P.~E.~Farrell}

\title{{\TheTitle}\thanks{\textbf{Funding:} This research is supported by EPSRC grants EP/R029423/1, and by the EPSRC Centre For Doctoral Training in Industrially Focused Mathematical Modelling (EP/L015803/1) in collaboration with Simula Research Laboratory.}}

\author{
  M. Croci${}^\ddagger$\thanks{Mathematical Institute, University of Oxford, Oxford, UK. (\textbf{\url{matteo.croci@maths.ox.ac.uk}}), (\textbf{\url{patrick.farrell@maths.ox.ac.uk}}).}
  \and
  P.~E.~Farrell\footnotemark[2]
}

\usepackage{amsopn}
\DeclareMathOperator{\R}{\mathbb{R}}

\DeclareMathOperator{\T}{\mathcal{T}}
\DeclareMathOperator{\diam}{\text{diam}}
\DeclareMathOperator{\B}{\mathcal{B}}

\DeclareMathOperator{\I}{\mathcal{I}}
\renewcommand{\P}{\mathscr{P}}

% Macros added by Marie
\usepackage{todonotes}
\definecolor{myblue}{RGB}{135, 206, 250}

% Optional PDF information
\ifpdf
\hypersetup{
  pdftitle={\TheTitle},
  pdfauthor={\TheAuthors}
}
\fi

\begin{document}

\maketitle

\begin{abstract}
		Projecting fields between different meshes commonly arises in computational physics. This operation requires a supermesh construction and its computational cost is proportional to the number of cells of the supermesh $n$. Given any two quasi-uniform meshes of $n_A$ and $n_B$ cells respectively, we show under standard assumptions that $n$ is proportional to $n_A + n_B$. This result substantially improves on the best currently available upper bound on $n$ and is fundamental for the analysis of algorithms that use supermeshes.
\end{abstract}

\begin{keywords}
	Supermesh, Galerkin projection, Interpolation, Conservation, Quasi-uniform meshes
\end{keywords}

%-------------------------------------------------------------------------------
%-------------------------------------------------------------------------------
\vspace{0.8cm}
\section{Introduction}
%\section{Supermesh constructions between quasi-uniform meshes}

The problem of projecting fields between non-nested meshes frequently arises in computational physics and scientific computing.
For this operation, the key ingredient is a \emph{supermesh} construction, a mesh that is a common refinement of both input meshes, cf.~figure \ref{fig:supermesh}. Applications of supermeshes arise in adaptive remeshing, diagnostic computation, multimesh discretisations, cut finite element methods, and multilevel Monte Carlo, among others~\cite{maddison2012directional, aguerre2017conservative,PlantaEtAl2018,SchlottkeEtAl2019,Johansson2019multimesh,Burman2015CutFEM,croci2018efficient}. Several efficient algorithms for supermesh construction have been published (cf.~\cite{Farrell2009Supermesh, Farrell2011Supermesh, menon2011conservative, krause2016parallel}). In applications, complexity bounds for supermesh construction are essential for the analysis of the algorithm as a whole. Typically, the total supermeshing cost is proportional to the number of cells $n$ of the resulting supermesh, for which only crude estimates are currently available; cf.~\cite{Farrell2011Supermesh}. In this note, we substantially improve on the best currently available upper bound for $n$.

More specifically, we show that the number of cells of a supermesh between two quasi-uniform meshes is linear in the sum of the number of cells of the parent meshes. We prove the following result.

\newpage

\begin{figure}[t!]
	%\vspace{-12pt}
	\centering
	\includegraphics[width=0.9\linewidth]{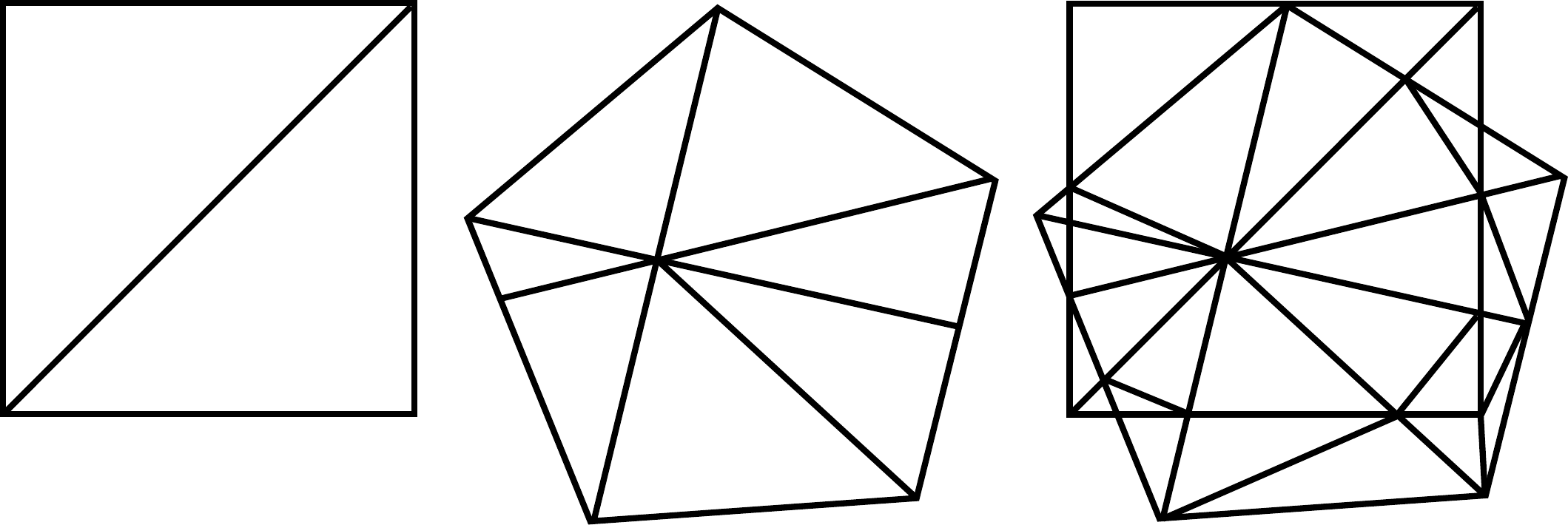}
	%\vspace{-6pt}
	\centering
	\caption{\textit{An example of a supermesh construction with non-overlapping domains. The first two meshes on the left are the parent meshes and the mesh on the right is one of their supermeshes.}}
	\label{fig:supermesh}
	\vspace{-12pt}
\end{figure}
\begin{theorem}
	\label{th:quasi_uniform_supermeshing_main}
    Let $A_h$ and $B_h$ be two quasi-uniform tessellations of two bounded domains $A,B\in\R^d$. Let $n_A\geq1$ and $n_B\geq 1$ be the number of cells of $A_h$ and $B_h$ respectively and let $S_h$ be a supermesh of $A_h$ and $B_h$ constructed in such a way that each pair of intersecting cells is triangulated into a finite number of supermesh cells. Then there exists a constant $C(d,A,B) \geq 1/2$ independent of $n_A$ and $n_B$ such that the number $n$ of cells of $S_h$ is bounded above by
    \begin{align}
    n \leq C(d,A,B) (n_A + n_B).
    \end{align}
\end{theorem}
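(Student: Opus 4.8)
The plan is to bound the total cell count by the number of intersecting cell pairs and then to control that number by a volume-packing argument driven by quasi-uniformity.

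First I would reduce the statement to a pair-counting problem. Let $N_{\mathrm{int}}$ denote the number of pairs $(a,b)$ with $a\in A_h$, $b\in B_h$ whose intersection $a\cap b$ has nonempty interior. By construction every cell of $S_h$ lies in exactly one such intersection, so $n=\sum_{(a,b)}(\text{number of cells subdividing }a\cap b)$. Since each cell is a $d$-simplex, $a\cap b$ is a convex polytope cut out by at most $2(d+1)$ half-spaces; such a polytope has a number of vertices bounded in terms of $d$ alone, hence can be triangulated into at most a constant $c_1(d)$ simplices. This turns the hypothesised ``finite number'' into a uniform bound and gives $n\le c_1(d)\,N_{\mathrm{int}}$, so it remains to prove $N_{\mathrm{int}}\lesssim n_A+n_B$, where $\lesssim$ absorbs constants depending only on $d$, the domains, and the quasi-uniformity constants.

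For the key estimate I would use quasi-uniformity to fix characteristic sizes $h_A,h_B$ with $\diam(a)\le\Gamma h_A$ and $|a|\ge\gamma h_A^d$ for all $a\in A_h$, and analogously for $B_h$; summing cell volumes over each tessellation yields the two-sided relations $h_A^d\asymp|A|/n_A$ and $h_B^d\asymp|B|/n_B$. Now fix $b\in B_h$ and bound the number of $a$ meeting it: if $a\cap b\neq\emptyset$ then $a\subseteq\B(x_b,r)$ with $r=\Gamma h_A+\diam(b)\lesssim h_A+h_B$ for any $x_b\in b$. Because the cells of $A_h$ have pairwise disjoint interiors and volume at least $\gamma h_A^d$, a packing bound gives
\[
\#\{a\in A_h:a\cap b\neq\emptyset\}\ \le\ \frac{|\B(x_b,r)|}{\gamma h_A^d}\ \lesssim\ \frac{(h_A+h_B)^d}{h_A^d}.
\]

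Summing over the $n_B$ cells of $B_h$ and using $(h_A+h_B)^d\le 2^{d-1}(h_A^d+h_B^d)$ gives $N_{\mathrm{int}}\lesssim n_B\bigl(1+h_B^d/h_A^d\bigr)$. Substituting $h_B^d/h_A^d\asymp(|B|\,n_A)/(|A|\,n_B)$ from the size relations converts the second term into $\lesssim(|B|/|A|)\,n_A$ while the first stays $\lesssim n_B$, so $N_{\mathrm{int}}\lesssim n_A+n_B$; combined with $n\le c_1(d)N_{\mathrm{int}}$ this proves the theorem. I expect the packing step to be the main obstacle: the crude bound on intersecting pairs is the product $n_An_B$, and the entire content is that shape regularity forces the $A$-cells meeting a fixed $b$ to pack disjointly into a ball of radius $\lesssim h_A+h_B$, collapsing the count per cell to a volume ratio. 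The estimate must be arranged to stay linear across every regime of $h_A/h_B$; the symmetrisation $(h_A+h_B)^d\le 2^{d-1}(h_A^d+h_B^d)$ together with the volume relations is exactly what balances the coarse-against-fine and fine-against-coarse cases into the single bound $n_A+n_B$.
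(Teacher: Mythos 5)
Your core estimate is essentially the paper's: you bound the number of parent cells meeting a fixed cell by enclosing them in a ball of radius $O(h_A+h_B)$, pack by the inradius volume lower bound, convert $h_A^d\asymp|A|/n_A$ and $h_B^d\asymp|B|/n_B$ via the two-sided cell count, and balance the regimes with $(x+y)^d\le 2^{d-1}(x^d+y^d)$. The paper does exactly this (summing over $A$-cells rather than $B$-cells, which is immaterial), packaged as a lemma bounding $\I(G,D_h)$ by $\rho^{-d}|F_{\tilde h}(G)\cap D|/|D|\cdot n_D$ for a fattened neighbourhood $F_{\tilde h}(G)$.

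There is, however, a genuine gap in your reduction step. You assert that every cell of $S_h$ lies in exactly one intersection $a\cap b$ with $a\in A_h$, $b\in B_h$, so that $n\le c_1(d)N_{\mathrm{int}}$. This is false when $A\neq B$: the supermesh is by definition a triangulation of $A\cup B$, so it contains cells in $(A\cup B)\setminus(A\cap B)$ that are contained in a cell of only \emph{one} parent mesh and meet no cell of the other (the paper's figure deliberately shows non-overlapping domains). These cells are invisible to $N_{\mathrm{int}}$ and your identity $n=\sum_{(a,b)}(\text{cells subdividing }a\cap b)$ undercounts them. The paper handles this by splitting $n=n^{\text{in}}+n^{\text{out}}$ and bounding $n^{\text{out}}$ separately by $\bar c(d)\bigl(\I(A\setminus B,A_h)+\I(B\setminus A,B_h)\bigr)\le c^{\text{out}}(n_A+n_B)$, using the same intersection-counting lemma. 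Your argument needs an analogous clause; as written it only proves the theorem when $A=B$. A secondary, more cosmetic issue: you assume the cells are $d$-simplices to get a uniform triangulation constant $c_1(d)$, whereas the theorem is stated for arbitrary quasi-uniform tessellations with any cell shape; the paper simply takes $\bar c(d)$ to be the maximal number of simplices any pairwise intersection is triangulated into, as licensed by the hypothesis.
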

This result improves on the previously known bound, proven e.g.~in \cite{Farrell2011Supermesh}, which states that ${n \leq \tilde{C}(d,A,B) n_A n_B}$ for some positive constant $\tilde{C}(d,A,B)$. Note that theorem \ref{th:quasi_uniform_supermeshing_main} applies to any type of (quasi-uniform) tessellation with any cell shape.

\section{Preliminaries}

Before proceeding, we briefly recall the definitions of a supermesh and of a quasi-uniform tessellation.

\begin{definition}[Supermesh, \cite{Farrell2011Supermesh, Farrell2009Supermesh}]
	Let $A,B\subset\R^d$ be two domains and let $A_h$, $B_h$ be tessellations of $A$ and $B$ respectively. A supermesh $S_h$ of $A_h$ and $B_h$ is a common refinement of $A_h$ and $B_h$. More specifically, $S_h$ is a triangulation of $S = A \cup B$ such that:
	\begin{enumerate}[leftmargin=1cm]
		\item $\textnormal{vertices}(A_h) \cup \textnormal{vertices}(B_h) \subseteq \textnormal{vertices}(S_h)$,
		\item $|e_S \cap e| \in \{0, |e_S|\}$ for all cells $e_S\in S_h$, $e\in (A_h \cup B_h)$.
	\end{enumerate}
\end{definition}
Here we indicate with $|D|$ the measure of a domain $D\subset\R^d$. The first condition means that every parent mesh vertex must also be a vertex of the supermesh, while the second states that every supermesh cell is completely contained within exactly one cell of either parent mesh; cf.~\cite{Farrell2009Supermesh}. The supermesh construction is not unique, as any conforming refinement of a supermesh is also a supermesh. Efficient algorithms for computing the supermesh are available, cf.~\cite{libsupermesh-tech-report}. Supermesh cells always lie within the intersection of a single pair of parent mesh cells and therefore the number of supermesh cells is proportional to the number of intersecting cells $K$.

Requiring that meshes involved in finite element computations are quasi-uniform is a standard working assumption in the literature, see e.g.~\cite{brenner2007mathematical}. We now recall the definition of quasi-uniformity.
\begin{definition}[definition $4.4.13$ in \cite{brenner2007mathematical}]
	\label{def:quasi-uniformity}
	Let $D$ be a given domain and let $\{\T_{\hat{h}}\}$, be a family of tessellations of $D$ such that for $0<\hat{h}\leq 1$,
	\begin{align}
	\label{eq:outer_ball0}
	\max\{\diam e : e\in\T_{\hat{h}}\} \leq \hat{h}\diam D,
	\end{align}
	where $\diam D$ is the diameter of $D$. The family is said to be quasi-uniform if there exists $\hat{\rho} > 0$ (independent of $\hat{h}$) such that
	\begin{align}
	\label{eq:inner_ball0}
	\min\{\diam \underline{\B}_e: e \in \T_{\hat{h}}\}\geq \hat{\rho} \hat{h} \diam D,
	\end{align}
	for all $\hat{h}\in(0,1]$, where $\underline{\B}_e$ is the largest ball contained in $e$ such that $e$ is
	star-shaped with respect to $\underline{\B}_e$ (cf.~definition 4.2.2 in \cite{brenner2007mathematical}).
\end{definition}

To simplify the exposition of what follows, it is more convenient to use the following property of quasi-uniform tessellations.
\begin{lemma}
	\label{lemma:quasi-uniformity}
	Let $D$ be a bounded domain and let $\{\T_{\hat{h}}\}$ be a quasi-uniform family of tessellations. Then there exist $h$ and $\rho\in(0,1]$ with $0<h\leq c_d = \sqrt{2d/(d+1)}$ such that
	\begin{align}
	\label{eq:outer_ball}
	\max\{\diam \overline{\B}_e : e\in\T_{\hat{h}}\} \leq h\diam D,
	\end{align}
	where $\overline{\B}_e$ is the smallest ball containing $e$ and
	\begin{align}
	\label{eq:inner_ball}
	\min\{\diam \underline{\B}_e: e \in \T_{\hat{h}}\}\geq \rho h \diam D,
	\end{align}
	for all $h\in(0,c_d]$, where $\underline{\B}_e$ is the largest ball contained in $e$ such that $e$ is star-shaped with respect to $\underline{\B}_e$.
\end{lemma}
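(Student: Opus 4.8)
The plan is to derive \eqref{eq:outer_ball}--\eqref{eq:inner_ball} from the raw quasi-uniformity conditions \eqref{eq:outer_ball0}--\eqref{eq:inner_ball0} of Definition~\ref{def:quasi-uniformity}, which are phrased through the cell diameter $\diam e$ rather than through the circumradius $\diam \overline{\B}_e$. The single substantive ingredient is Jung's theorem~\cite{Jung1901, Jung1910}, and I expect the identification of the constant to be the only genuinely interesting point: Jung's theorem bounds the circumradius of any bounded set in $\R^d$ by its diameter, and the resulting factor is precisely the $c_d = \sqrt{2d/(d+1)}$ that appears in the statement. Everything else is a reparametrisation of the family together with elementary tracking of constants.

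For the outer estimate I would recall that Jung's theorem guarantees that every $e \subset \R^d$ of diameter $\diam e$ lies in a ball of radius at most $\diam(e)\sqrt{d/(2(d+1))}$; since $\overline{\B}_e$ is the smallest such ball, its diameter satisfies $\diam \overline{\B}_e \le c_d\, \diam e$ (the reverse bound $\diam e \le \diam \overline{\B}_e$ is immediate from $e \subseteq \overline{\B}_e$, confirming the two length scales are comparable). Inserting the outer bound of \eqref{eq:outer_ball0} then gives
\begin{align}
\max_{e\in\T_{\hat h}} \diam \overline{\B}_e \;\le\; c_d \max_{e\in\T_{\hat h}} \diam e \;\le\; c_d\,\hat h\,\diam D =: h\,\diam D .
\end{align}
Because $\hat h \in (0,1]$, the new parameter $h = c_d \hat h$ ranges exactly over $(0,c_d]$; reindexing the family by $\T_h := \T_{h/c_d}$ makes the existentially quantified $h$ in the statement precise and yields \eqref{eq:outer_ball}.

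Since the inscribed ball $\underline{\B}_e$ is the same object in both formulations, \eqref{eq:inner_ball} follows from \eqref{eq:inner_ball0} merely by substituting $\hat h = h/c_d$, which produces the new constant $\rho := \hat\rho/c_d$. It then remains to check $\rho \in (0,1]$. Positivity is clear since $\hat\rho>0$ and $c_d>0$. For the upper bound I would use that the inscribed ball fits inside its cell, so $\diam \underline{\B}_e \le \diam e$ for every cell; evaluating \eqref{eq:inner_ball0} at the minimising cell and comparing with \eqref{eq:outer_ball0} forces $\hat\rho \le 1$, and since $c_d \ge 1$ for all $d \ge 1$ we obtain $\rho = \hat\rho/c_d \le 1$. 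The argument is then complete, the only non-routine step being the recognition that $c_d$ is the sharp Jung constant, after which all the estimates are immediate.
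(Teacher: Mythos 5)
Your proposal is correct and follows essentially the same route as the paper: apply Jung's theorem to convert the diameter bound into a circumball bound, reparametrise via $h=c_d\hat h$ and $\rho=\hat\rho/c_d$, and then check $\rho\le 1$ by comparing the inscribed and circumscribed length scales (your variant via $\hat\rho\le1$ and $c_d\ge1$ is an equivalent way of making the paper's comparison of \eqref{eq:outer_ball} and \eqref{eq:inner_ball}). No gaps.
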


\begin{proof}
	Jung's theorem (see \cite{Jung1901,Jung1910}) states that for any compact set $e\subset \R^d$,
	\begin{align}
	\diam \overline{\B}_e \leq \sqrt{\frac{2d}{d+1}} \diam e = c_d\diam e.
	\end{align}
	Therefore
	\begin{align}
	\label{eq:step1}
	\max\{\diam \overline{\B}_e : e\in\T_{\hat{h}}\} &\leq c_d \max\{\diam e : e\in\T_{\hat{h}}\} \leq c_d\hat{h}\diam D,
	\end{align}
	where we have used \eqref{eq:outer_ball0} in the last step.
	Equation \eqref{eq:inner_ball0} also gives us,
	\begin{align}
	\label{eq:step2}
	\min\{\diam \underline{\B}_e: e \in \T_{\hat{h}}\} &\geq \hat{\rho} \hat{h} \diam D.
	\end{align}
	Equations \eqref{eq:step1} and \eqref{eq:step2} are the same as \eqref{eq:outer_ball} and \eqref{eq:inner_ball} respectively after setting $h=c_d\hat{h}$ and $\rho = \hat{\rho}/c_d$. Note that since necessarily
	\begin{align}
	\label{eq:step3}
	\min\{\diam \underline{\B}_e: e \in \T_{\hat{h}}\} \leq \max\{\diam \overline{\B}_e : e\in\T_{\hat{h}}\},
	\end{align}
	and by combining \eqref{eq:step3} with \eqref{eq:outer_ball} and \eqref{eq:inner_ball} it is clear that $\rho \leq 1$.
\end{proof}
It will be more convenient in the sequel to index a family of tesselations by $h$ instead of $\hat{h}$.

In what follows we also need two auxiliary lemmas. The first states that the constants $h$ and $\rho$ appearing in lemma \ref{lemma:quasi-uniformity} also provide a lower and upper bound for the number of cells of a quasi-uniform mesh.
\begin{lemma}
	\label{lemma:quasi-uniform_lower_upper_bounds}
	Let $D_h$ be a quasi-uniform tessellation of a bounded domain $D\subset\R^d$ with $n_D\geq 1$ cells and let $c_D = 2^{d}|D|/(c_\pi(d)\normalfont{\diam}(D)^d)$ with $c_\pi = 2$ in 1D, $c_\pi = \pi$ in 2D and $c_\pi = 4\pi/3$ in 3D, then
	\begin{align}
	c_D h^{-d} \leq n_D \leq  c_D \rho^{-d} h^{-d},
	\end{align}
	where $h$ and $\rho$ are the constants appearing in lemma \ref{lemma:quasi-uniformity}.
\end{lemma}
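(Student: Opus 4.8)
The plan is to exploit the single structural fact that the cells of $D_h$ form a partition of $D$ up to a set of measure zero, so that their measures sum to the measure of the whole domain, $\sum_{e\in D_h}|e| = |D|$. The two inequalities then follow by bounding the measure of each individual cell $e$ from above and from below using, respectively, its circumscribed ball $\overline{\B}_e$ and its inscribed ball $\underline{\B}_e$, whose diameters are already controlled by lemma \ref{lemma:quasi-uniformity}. The key geometric ingredient is that the measure of a $d$-dimensional ball of diameter $\delta$ equals $c_\pi(d)(\delta/2)^d = c_\pi(d)\,\delta^d/2^d$, since $c_\pi(d)$ is precisely the volume of the unit ball in $\R^d$ for $d\in\{1,2,3\}$.

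For the lower bound on $n_D$, I would use $e\subseteq\overline{\B}_e$, which gives $|e|\leq|\overline{\B}_e|$, together with \eqref{eq:outer_ball}, by which every circumscribed ball has diameter at most $h\diam D$. Hence each cell satisfies $|e|\leq c_\pi(d)(h\diam D)^d/2^d$, and summing over all $n_D$ cells yields
\begin{align}
|D| = \sum_{e\in D_h}|e| \leq n_D\,\frac{c_\pi(d)(h\diam D)^d}{2^d},
\end{align}
which rearranges to $n_D \geq 2^d|D|\,/\,\bigl(c_\pi(d)(\diam D)^d\bigr)\,h^{-d} = c_D\,h^{-d}$.

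For the upper bound I would argue dually. The inscribed ball satisfies $\underline{\B}_e\subseteq e$, so $|e|\geq|\underline{\B}_e|$, and by \eqref{eq:inner_ball} every inscribed ball has diameter at least $\rho h\diam D$; thus $|e|\geq c_\pi(d)(\rho h\diam D)^d/2^d$ for every cell. Summing gives
\begin{align}
|D| = \sum_{e\in D_h}|e| \geq n_D\,\frac{c_\pi(d)(\rho h\diam D)^d}{2^d},
\end{align}
so that $n_D \leq 2^d|D|\,/\,\bigl(c_\pi(d)(\rho h\diam D)^d\bigr) = c_D\,\rho^{-d}\,h^{-d}$.

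There is no serious obstacle here: the argument is entirely elementary once the partition property and the ball-volume formula are in place. The only points requiring a little care are bookkeeping ones, namely correctly converting diameters to radii (the factor $2^{-d}$), checking that $c_\pi(d)$ is indeed the unit-ball volume in each dimension so that the stated constant $c_D$ emerges exactly, and confirming the containments $\underline{\B}_e\subseteq e\subseteq\overline{\B}_e$ that underpin the two monotonicity steps—the inner containment being guaranteed by the star-shapedness stipulated in lemma \ref{lemma:quasi-uniformity}.
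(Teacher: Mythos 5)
Your proof is correct and follows essentially the same route as the paper: both bound each cell's measure above by that of its circumscribed ball and below by that of its inscribed ball, sum over the partition $\sum_e |e| = |D|$, and solve for $n_D$ using \eqref{eq:outer_ball} and \eqref{eq:inner_ball}. The bookkeeping points you flag (the $2^{-d}$ diameter-to-radius conversion, $c_\pi(d)$ as the unit-ball volume, and the containments $\underline{\B}_e\subseteq e\subseteq\overline{\B}_e$) are exactly the ones the paper relies on implicitly.
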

\begin{proof}
	Let $e_i\in D_h$ for $i=1,\dots, n_D$ be the cells of $D_h$. We compute a lower bound for $n_D$ by noting that the measure of each cell is less than or equal to the measure of the smallest ball containing it, which gives
	\begin{align}
	|D| = \sum\limits_i|e_i| \leq c_\pi2^{-d}\sum\limits_i\diam(\overline{\B}_{e_i})^d \leq c_\pi2^{-d}n_D h^d\diam(D)^d,
	\end{align}
	where we have used equation \eqref{eq:outer_ball} in the last step. The lower bound is obtained by solving for $n_D$. Similarly we obtain an upper bound by noting that the measure of each cell is larger than the measure of any ball it contains. This gives,
	\begin{align}
	|D| = \sum\limits_i|e_i| \geq c_\pi2^{-d}\sum\limits_i\diam(\underline{\B}_{e_i})^d \geq c_\pi2^{-d}n_D \rho^d h^d\diam(D)^d,
	\end{align}
	where we used equation \eqref{eq:inner_ball} in the last step. Solving for $n_D$ yields the upper bound.
\end{proof}
For any $G\subset\R^d$, we use the notation $G\subset\joinrel\subset\R^d$ to indicate that the closure of $G$, $\bar{G}$, is a compact subset of $\R^d$.
The second lemma we need provides an upper bound for the number of intersections between the cells of a mesh $D_h$ and any $G\subset\joinrel\subset\R^d$.
\begin{lemma}
	\label{lemma:intersections_bounded_domain}
	Let $D_h$ be a quasi-uniform tessellation of a bounded domain $D\subset\R^d$ with $n_D\geq 1$ cells. Let $G\subset\joinrel\subset\R^d$ and, for a fixed $\delta>0$, define its $\delta$-fattening $F_\delta(G)$ to be the set of all points in $\R^d$ with distance from $\bar{G}$ smaller or equal than $\delta$ (with the convention that $F_\delta(\emptyset)=\emptyset$ for all $\delta$). Let $\I(G,D_h)$ be the number of cells of $D_h$ that intersect with $G$, then
	\begin{align}
	\I(G,D_h) \leq \min\left(\rho^{-d}\frac{|F_{\tilde{h}}(G) \cap D|}{|D|},1\right)n_D
	\end{align}
	where $\tilde{h}=h\diam D$ and $h$ and $\rho$ are the constants appearing in lemma \ref{lemma:quasi-uniformity}.
\end{lemma}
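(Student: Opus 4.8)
The plan is to show that every cell of $D_h$ that meets $G$ is trapped inside the fattened region $F_{\tilde{h}}(G) \cap D$, and then to convert this volumetric containment into a cell count by exploiting the uniform lower bound on cell volumes that quasi-uniformity provides. The whole argument is a packing estimate that recycles the inequalities already established in lemmas \ref{lemma:quasi-uniformity} and \ref{lemma:quasi-uniform_lower_upper_bounds}.

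First I would establish the geometric containment. If a cell $e \in D_h$ intersects $G$, pick a common point $p \in e \cap G \subseteq \bar{G}$. Since $e \subseteq \overline{\B}_e$, equation \eqref{eq:outer_ball} gives $\diam e \leq \diam \overline{\B}_e \leq h \diam D = \tilde{h}$, so every $y \in e$ satisfies $\mathrm{dist}(y, \bar{G}) \leq \|y - p\| \leq \tilde{h}$ and hence lies in $F_{\tilde{h}}(G)$. As $e \subseteq D$ trivially, each intersecting cell obeys $e \subseteq F_{\tilde{h}}(G) \cap D$. Next I would bound the volume of a single cell from below: exactly as in the proof of lemma \ref{lemma:quasi-uniform_lower_upper_bounds}, the inscribed-ball estimate \eqref{eq:inner_ball} gives $|e| \geq c_\pi 2^{-d}\diam(\underline{\B}_e)^d \geq \rho^d h^d |D|/c_D$ for every $e$, using the identity $c_\pi 2^{-d}\diam(D)^d = |D|/c_D$. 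Combining this with the lower bound $c_D h^{-d} \leq n_D$ from the same lemma (equivalently $h^d \geq c_D/n_D$) yields the clean per-cell estimate $|e| \geq \rho^d |D|/n_D$.

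Finally I would assemble the count. The $\I(G,D_h)$ intersecting cells have pairwise disjoint interiors and all lie in $F_{\tilde{h}}(G) \cap D$, so summing their volumes gives
\begin{align}
|F_{\tilde{h}}(G) \cap D| \geq \I(G,D_h)\, \min_{e} |e| \geq \I(G,D_h)\, \rho^d \frac{|D|}{n_D}.
\end{align}
Solving for $\I(G,D_h)$ produces $\I(G,D_h) \leq \rho^{-d}\,(|F_{\tilde{h}}(G)\cap D|/|D|)\, n_D$, and since trivially $\I(G,D_h) \leq n_D$ the minimum with $1$ follows; the empty case is handled by the convention $F_\delta(\emptyset)=\emptyset$, which forces both sides to vanish.

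I expect the only delicate point to be the containment step: one must interpret ``intersects'' so that even a boundary-only contact still forces $e \subseteq F_{\tilde{h}}(G)$, which is precisely why the estimate is phrased with the \emph{closed} fattening (distance $\leq \delta$) rather than an open one. Everything downstream is a volume-packing argument resting on inequalities already proved, so no genuinely new obstacle arises; the main care needed is in tracking the constants $c_\pi$, $c_D$, $h$ and $\rho$ so that the per-cell volume bound collapses to the dimension-free form $\rho^d |D|/n_D$.
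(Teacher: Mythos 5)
Your proposal is correct and follows essentially the same route as the paper: both arguments rest on the containment of every intersecting cell in $F_{\tilde{h}}(G)\cap D$, followed by a volume-packing count in which each cell's measure is bounded below by that of its inscribed ball of diameter $\rho\tilde{h}$ and the mesh-size/cell-count relation of lemma \ref{lemma:quasi-uniform_lower_upper_bounds} converts the bound into one involving $n_D$. The only cosmetic difference is that the paper phrases the counting step via the ball-packing number $\P(\B_{\rho\tilde{h}},\cdot)$, whereas you sum the per-cell volume bound $|e|\geq \rho^d|D|/n_D$ directly; these are the same estimate.
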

\begin{proof}
	We first devise a crude criterion for excluding the possibility that a given $e\in D_h$ can intersect with $G$. Since the diameter of $e$ is less than $\tilde{h}$, if the minimum distance between $e$ and $G$ is greater than or equal to $\tilde{h}$, the cell cannot possibly intersect $G$. That is,
	\begin{align}
	|e \cap G| = 0,\quad\text{if}\quad \max\limits_{\bm{x}\in e}\text{dist}(\bm{x},G) > \tilde{h},
	\end{align}
	which implies that
	\begin{align}
	|e \cap G| = 0,\quad\text{if}\quad \max\limits_{\bm{x}\in e}\text{dist}(\bm{x},F_{\tilde{h}}(G)) > 0,
	\end{align}
	i.e.~all intersecting cells must entirely be contained in $F_{\tilde{h}}(G)$, and, more specifically, in $F_{\tilde{h}}(G) \cap D$ since $e\subseteq D$. Therefore, we have that
	\begin{align}
	\label{eq:lemma0}
	\I(G,D_h)\leq \P(D_h,F_{\tilde{h}}(G) \cap D),
	\end{align}
	where $\P(D_h,F_{\tilde{h}}(G) \cap D)$ is the number of cells of $D_h$ that can be packed within $F_{\tilde{h}}(G) \cap D$ without overlapping. In turn, we can bound
	\begin{align}
	\label{eq:lemma1}
	\P(D_h,F_{\tilde{h}}(G) \cap D) \leq \P(\B_{\rho\tilde{h}},F_{\tilde{h}}(G) \cap D),
	\end{align}
	where with abuse of notation we denote with $\P(\B_{\rho\tilde{h}},F_{\tilde{h}}(G) \cap D)$ the number of balls of diameter $\rho\tilde{h}$ that can be packed within $F_{\tilde{h}}(G) \cap D$ without overlapping. This bound holds since all cells of $D_h$ entirely contain a ball of this diameter by quasi-uniformity and lemma \ref{lemma:quasi-uniformity}. Finding the sharpest possible upper bound for the packing of balls within a domain is a classical and extremely difficult problem in geometry called the ball packing problem (see e.g.~\cite{toth1993} for a survey). A crude upper bound is given by
	\begin{align}
	\label{eq:lemma2}
	\P(\B_{\rho\tilde{h}},F_{\tilde{h}}(G) \cap D) \leq \frac{|F_{\tilde{h}}(G) \cap D|}{|\B_{\rho\tilde{h}}|},
	\end{align}
	since the the sum of the measures of all the packed non-overlapping balls cannot exceed the measure of the set into which they are packed.
	Now, we have that
	\begin{align}
	\label{eq:lemma3}
	|\B_{\rho\tilde{h}}|^{-1} = c_\pi(d)^{-1}2^d(\rho h \diam(D))^{-d}\leq |D|^{-1}\rho^{-d}n_D,
	\end{align}
	where $c_\pi(d)$ is the same constant as in lemma \ref{lemma:quasi-uniform_lower_upper_bounds}, which we used to obtain the upper bound. Combining equations \eqref{eq:lemma0}-\eqref{eq:lemma3} together, we obtain
	\begin{align}
	\I(G,D_h)\leq \rho^{-d} \frac{|F_{\tilde{h}}(G) \cap D|}{|D|}n_D.
	\end{align}
	Noting that necessarily $\I(G,D_h)$ cannot exceed $n_D$ concludes the proof.
\end{proof}

\section{Main result}

We now prove theorem \ref{th:quasi_uniform_supermeshing_main}. To the authors' knowledge, this result is new. 

\begin{proof}
	Since $A_h$ and $B_h$ are quasi-uniform, we have by lemma \ref{lemma:quasi-uniformity},
	\begin{align}
	\max\{\diam \overline{\B}_e : e\in A_h\} &\leq h_A\diam A = \tilde{h}_A, \label{eqn:tildehA}\\
	\max\{\diam \overline{\B}_e : e\in B_h\} &\leq h_B\diam B = \tilde{h}_B, \label{eqn:tildehB}\\
	\min\{\diam \underline{\B}_e: e \in A_h\} &\geq \rho_A h_A \diam A = \rho_A\tilde{h}_A,\\
	\min\{\diam \underline{\B}_e: e \in B_h\} &\geq \rho_B h_B \diam B = \rho_B\tilde{h}_B,
	\end{align}
    for some $h_A,h_B\in(0,c_d]$, $\rho_A,\rho_B\in(0,1]$ independent of $h_A,\ h_B$.
    We have that
    \begin{align}
    \label{eq:n_nin_nout}
    n = n^{\text{in}} + n^{\text{out}},
    \end{align}
    where $n^{\text{in}}$ is the number of supermesh cells in $A \cap B$ and $n^{\text{out}}$ is the number of supermesh cells in $(A \cup B)\setminus (A \cap B)$.
    
    We will first provide a bound for $n_{\text{out}}$. We have that
    %\begin{align}
    %n^{\text{out}} = \chi(A,B)\left(\bar{c}(d)(\I(\partial A, B_h) + \I(A_h, \partial B)) + n_A^{\text{out}} + n_B^{\text{out}}\right),
    %\end{align}
    \begin{align}
    	n^{\text{out}} \leq \bar{c}(d)(\I(A\setminus B,A_h) + \I(B \setminus A, B_h))
    \end{align}
    where $\bar{c}(d)>0$ is the maximal number of simplices that the intersection between two cells of $A_h$ and $B_h$ is triangulated into; typical values for the intersection of convex polygons are given in \cite{Farrell2011Supermesh}. In this case we have $\bar{c}(1)=1$, $\bar{c}(2)=4$, and $\bar{c}(3)=45$. We can now apply lemma \ref{lemma:intersections_bounded_domain} to obtain
    \begin{align}
    \label{eq:nout}
    n^{\text{out}} &\leq \bar{c}(d)\left(\rho^{-d}_A\frac{|F_{\tilde{h}_A}(A\setminus B) \cap A|}{|A|}n_A + \rho^{-d}_B\frac{|F_{\tilde{h}_B}(B\setminus A) \cap B|}{|B|}n_B\right)\\
    &\leq \bar{c}(d)\max(\rho_A^{-d},\rho_B^{-d})(n_A+n_B)=c^{\text{out}}(d,A,B)(n_A+n_B).\notag
    \end{align}
    Note that from the first inequality we have that $n^{\text{out}}=0$ if ${A = B}$.

    We now derive an upper bound for $n^{\text{in}}$. Let $\I(A_h,B_h)$ be the number of intersecting cell pairs between $A_h$ and $B_h$. We have that
    \begin{align}
        \label{eq:0}
        n^{\text{in}} \leq \bar{c}(d)\I(A_h,B_h).
    \end{align}
    For a given cell $e_i^A\in A_h$, let $\I(e_i^A, B_h)$ be the number of cells of $B_h$ that intersect with $e_i^A$. We then have that
	\begin{align}
	\label{eq:1}
	\I(A_h, B_h) = \sum\limits_{i=1}^{n_A}\I(e_i^A,B_h).
	\end{align}
	Applying lemma \ref{lemma:intersections_bounded_domain} we obtain
	\begin{align}
	\label{eq:temp_bound0}
	\I(A_h, B_h) \leq \rho_B^{-d}|B|^{-1}n_B\sum\limits_{i=1}^{n_A}|F_{\tilde{h}_B}(e_i^A) \cap B|,
	\end{align}
	where each term in the sum can be bounded by
	\begin{align}
		|F_{\tilde{h}_B}(e_i^A) \cap B|\leq |F_{\tilde{h}_B}(\B_{\tilde{h}_A})|=|\B_{\tilde{h}_A+2\tilde{h}_B}|,\quad i=1,\dots,n_A,
	\end{align}
	where we used the fact that all cells $e_i^A \in A_h$ can be entirely contained within a ball of diameter $\tilde{h}_A$. We then have that
	\begin{align}
	\sum\limits_{i=1}^{n_A}|F_{\tilde{h}_B}(e_i^A) \cap B|&\leq n_A|\B_{\tilde{h}_A+2\tilde{h}_B}|\notag\\
	&=n_A c_\pi(d)2^{-d}(\tilde{h}_A+2\tilde{h}_B)^d \label{eq:temp_bound1}\\
	&= n_A(|A|^{1/d}c_A^{-1/d}h_A + 2|B|^{1/d}c_B^{-1/d}h_B)^d\notag
	%\leq c_Ac_\pi(d)\rho_A^{-d}\left(\frac{\tilde{h}_A+2\tilde{h}_B}{2h_A}\right)^d,
	\end{align}
	where $c_\pi(d)$, $c_A$ and $c_B$ are as in lemma \ref{lemma:quasi-uniform_lower_upper_bounds}. Note that we have removed the tildes since the diameter terms are included in $c_A$ and $c_B$. For instance,
	\begin{align}
	c_\pi(d)^{1/d}\tilde{h}_A/2 = \frac{c_\pi(d)^{1/d}h_A\diam(A)|A|^{1/d}}{2|A|^{1/d}} = |A|^{1/d}c_A^{-1/d}h_A.
	\end{align}
	Lemma \ref{lemma:quasi-uniform_lower_upper_bounds} yields the upper bounds
	\begin{align}
	c_A^{-1/d}h_A\leq \rho_A^{-1}n_A^{-1/d},\quad c_B^{-1/d}h_B\leq \rho_B^{-1}n_B^{-1/d}
	\end{align}
	Using these in \eqref{eq:temp_bound1} and combining it with \eqref{eq:temp_bound0} gives
	\begin{align}
	\label{eq:temp_bound2}
	\I(A_h, B_h) \leq \rho_B^{-d}|B|^{-1}n_An_B(|A|^{1/d}\rho_A^{-1}n_A^{-1/d} + 2|B|^{1/d}\rho_B^{-1}n_B^{-1/d})^d\\
	\leq \min(\lambda_{AB}\rho_A\rho_B,\rho_B^2/2)^{-d}(n_A^{1/d} + n_B^{1/d})^d,\quad\text{where}\quad\lambda_{AB}=|A|/|B|.\notag
	\end{align}
	Combining\eqref{eq:0} and \eqref{eq:temp_bound2} we have
	\begin{align}
	n^{\text{in}} &\leq \bar{c}(d)\min(\lambda_{AB}\rho_A\rho_B,\rho_B^2/2)^{-d}(n_A^{1/d} + n_B^{1/d})^d\notag\\
	&\leq c^{\text{in}}(d,A,B)(n_A + n_B),
	\label{eq:nin}
	\end{align}
	where $c^{\text{in}}=2^{d-1}\bar{c}(d)\min(\lambda_{AB}\rho_A\rho_B,\rho_B^2/2)^{-d}$ since by Jensen's inequality,
	\begin{align}
        \left(\frac{x + y}{2}\right)^d \leq \frac{x^d+y^d}{2},\quad\forall x,y \geq 0,\ d \in\mathbb{N}^+,
	\end{align}
	due to convexity of the function $f(t)=t^d$ for all $t\geq0$, $d\in\mathbb{N}^+$.
    Combining \eqref{eq:n_nin_nout} with \eqref{eq:nin} and \eqref{eq:nout} yields the claim with $C(d,A,B)=\max(c^{\text{in}},c^{\text{out}})$. Note that $C(d, A, B)$ cannot be smaller than $1/2$. In fact, we have that
    \begin{align}
        \max(n_A,n_B) \leq n \leq C(d, A, B) (n_A + n_B),
    \end{align}
    and therefore
    \begin{align}
        \frac{\max(n_A, n_B)}{n_A + n_B} \leq C(d, A, B),\ \text{with } n_A, n_B \geq 1.
    \end{align}
    The quantity on the left attains its minimum value $1/2$ when $n_A = n_B$.
\end{proof}

\section{Numerical results}

The constant in the bound derived in theorem \ref{th:quasi_uniform_supermeshing_main} is not sharp. However, a sharper constant can easily be estimated in practice. We now offer an example in the case in which $A$ and $B$ coincide and are equal to $D = (-0.5,0.5)^d$ in 2D and 3D.

We consider a hierarchy $\{D_h^\ell\}_{\ell=1}^{\ell = L}$ of non-nested unstructured grids of the domain $D$, with $L=9$ in 2D and $L=5$ in 3D. The grids have mesh size decreasing geometrically with $\ell$ so that $h_\ell \propto 2^{-\ell}$, where $h_\ell$ is the constant $h$ appearing in lemma \ref{lemma:quasi-uniformity} for the mesh $D_h^\ell$. Hierarchies of this type are commonly used within non-nested geometric multigrid methods for which a conservative transfer technique is required.

For each $\ell > 1$ we construct a supermesh $S_h^\ell$ between $D_h^\ell$ and $D_h^{\ell-1}$ using the libsupermesh library \cite{libsupermesh-tech-report} and we compute the ratio
\begin{align}
    R_\ell = \dfrac{n_{S^\ell}}{n_{\ell} + n_{\ell-1}},
\end{align}
where $n_{S^\ell}$, $n_\ell$ and $n_{\ell-1}$ are the number of cells of $S_h^\ell$, $D_h^\ell$ and $D_h^{\ell-1}$ respectively. The value of $R_\ell$ clearly provides an estimate for the constant $C(d,A,B)$ appearing in theorem \ref{th:quasi_uniform_supermeshing_main} in this specific setting. The results are shown in tables \ref{tab:0}a and \ref{tab:0}b together with some additional information about the meshes involved. In both 2D and 3D the value of $R_\ell$ appears to monotonically increase with $\ell$ and to plateau for large $\ell$ to a value which is approximately $4$ in 2D and $40$ in 3D. The observation that $R_\ell$ plateaus is expected from the bound given by theorem \ref{th:quasi_uniform_supermeshing_main}.

\begin{table}[h!]
	\parbox{0.49\linewidth}{
		\centering
		\begin{tabular}{cccc}
			\toprule
			$\ell$ & $h_\ell$      & $\rho_\ell$ & $R_\ell$ \\ \midrule
			$1$    & $0.5878$ & $0.35$ &  n/a     \\
			$2$    & $0.3159$ & $0.31$ &  $2.8$   \\
			$3$    & $0.1455$ & $0.26$ &  $3.2$   \\
			$4$    & $0.7777$ & $0.20$ &  $3.6$   \\
			$5$    & $0.0393$ & $0.21$ &  $3.7$   \\
			$6$    & $0.0205$ & $0.20$ &  $3.8$   \\
			$7$    & $0.0108$ & $0.18$ &  $3.9$   \\
			$8$    & $0.0054$ & $0.18$ &  $3.9$   \\
			$9$    & $0.0027$ & $0.17$ &  $3.9$   \\
			\bottomrule\vspace{-9pt}
		\end{tabular}\\
	(a)
	}
	\parbox{0.49\linewidth}{
		\vspace{-0.89cm}
		\centering
		\begin{tabular}{cccc}
			\toprule
			$\ell$ & $h_\ell$      & $\rho_\ell$ & $R_\ell$ \\ \midrule
			$1$    & $0.4330$ & $0.141$  &  n/a     \\
			$2$    & $0.2173$ & $0.056$  & $22$    \\
			$3$    & $0.1061$ & $0.052$ &  $33$    \\
			$4$    & $0.0534$ & $0.040$ &  $36$    \\
			$5$    & $0.0252$ & $0.035$ &  $38$    \\
			$6$    & $0.0123$ & $0.027$ &  $39$    \\
			$7$    & $0.0057$ & $0.027$ &  $39$    \\
			\bottomrule\vspace{-9pt}
		\end{tabular}\\
		(b)
	}
	\caption{Mesh hierarchies considered in 2D (a) and 3D (b) and the resulting ratio $R_\ell$ between the number of cells of the supermesh between $D_h^\ell$ and $D_h^{\ell-1}$ and the sum of the numbers of cells of the parent meshes. We indicate with $h_\ell$ and $\rho_\ell$ the constants appearing in lemma \ref{lemma:quasi-uniformity} relative to the mesh $D_h^\ell$. Note that the mesh size $h_\ell$ is roughly proportional to $2^{-\ell}$ and that $R_\ell$ appears to be plateauing as $\ell$ grows, as predicted by theorem \ref{th:quasi_uniform_supermeshing_main}.}
	\label{tab:0}
\end{table}
\vspace{-0.2cm}

% 2D
%Check sum:  [2.77777778 3.15189873 3.60625    3.72217959 3.83425625 3.88598586 3.90498685 3.9130723 ]
%h:  [0.58783434 0.31599984 0.14547609 0.07776776 0.0393239  0.02050599 0.01079454 0.00544406 0.00275053]
%rho:  [0.35145369 0.30730892 0.25750746 0.2037704  0.21477136 0.20248883 0.17470819 0.1829006  0.17201819]
%
% 3D
%Check sum:  [11.52677029 36.20239234 39.51875    41.90713759]
%h:  [0.5        0.25289312 0.12664421 0.06323487 0.03169058]
%rho:  [0.5        0.29794978 0.26760982 0.26287229 0.26777772]

\section{Conclusions}

In this note we have improved on the previously known bound for the number of cells of a supermesh constructed between two meshes. Under the natural assumption that the parent meshes are quasi-uniform, we have showed that the number of supermesh cells is linear in the number of cells of the parent meshes. Numerical experimentation confirms the theory. This result is important for the analysis of algorithms that rely on supermesh construction, such as conservative interpolation and non-nested multilevel Monte Carlo algorithms.

\section*{Acknowledgements}
This publication is based on work partially supported by the EPSRC Centre For Doctoral Training in Industrially Focused Mathematical Modelling (EP/L015803/1) in collaboration with Simula Research Laboratory,
and by EPSRC grant EP/R029423/1.

\printbibliography

\end{document}